\numberwithin{equation}{section}
\newtheorem{thm}{Theorem}[section]
\newtheorem{lem}[thm]{Lemma}
\newtheorem{prop}[thm]{Proposition}
\theoremstyle{definition}
\newcommand{\R}{\mathbb{R}}
\newcommand{\C}{\mathbb{C}}
\newcommand{\HP}{\mathbb{H}}
\newcommand{\A}{\mathcal{A}}
\begin{document}
\title[Composition operators on weighted Bergman spaces]{Composition operators on weighted Bergman spaces of a half plane}
\author[S.~J.~Elliott and A.~Wynn]{Sam Elliott and Andrew Wynn}

\address{Department of Pure Mathematics\\
  University of Leeds\\
  Leeds\\
  LS2~9JT\\
  UK}

\email{samuel@maths.leeds.ac.uk}

\address{Department of Mathematics\\
  University College London\\
  Gower Street\\
  London\\
  WC1E 6BT\\
  UK}

\email{andrew.wynn@ucl.ac.uk}

\subjclass[2000]{47B33}

\thanks{The authors would like to thank the staff at the ICMS (Maxwell Institute) in Edinburgh, and the organisers of the Workshop on Operator Theory and its Applications, held at the ICMS, where this work was initiated}
\date{\today}

\begin{abstract}
We use induction and interpolation techniques to prove that a composition operator induced by a map $\phi$ is bounded on the weighted Bergman space $\A^2_\alpha(\mathbb{H})$ of the right half-plane if and only if $\phi$ fixes $\infty$ non-tangentially, and has a finite angular derivative $\lambda$ there. We further prove that in this case the norm, essential norm, and spectral radius of the operator are all equal, and given by $\lambda^{(2+\alpha)/2}$.
\end{abstract}
\maketitle
%\tableofcontents

\section{Introduction}
Analytic composition operators have been studied in a number of contexts, primarily on spaces of functions in the unit disc of the the complex plane. It has long been known as a consequence of the Littlewood subordination principle that all such operators are bounded on all the Hardy spaces, as well as a large class of other spaces of functions.

On the half-plane, however, things are somewhat more complicated. It is well know that there are unbounded composition operators on the half-plane. Indeed, in \cite{Matache2}, Valentin Matache proved that a composition operator is bounded on the Hardy space $H^2$ of the half plane if and only if the inducing map fixes the point at infinity, and has a finite angular derivative $\lambda$ there. Later, in \cite{ElliottJury} the first named author and Michael Jury sharpened this result, and showed that in the case when such a composition operator is bounded, the norm, essential norm and spectral radius of the operator are all equal to $\sqrt{\lambda}$. This in particular strengthened a result on non-compactness of composition operators produced by Matache in \cite{Matache}.

Noting that the Hardy space is effectively the Bergman space with weight $\alpha=-1$, we will take the known situation as a base case, and use induction and interpolation techniques to extend the results to all weighted Bergman spaces. In particular, we provide a formula for the norm which agrees with the known results for the Hardy space case. For a thorough discussion of Bergman spaces and their composition operators, see \cite{CowenMacCluer} or \cite{HedenmalmHaakanZhu}.

\section{Preliminaries}
Let $\HP$ denote the right half-plane $\{\Re z >0\}$. For $\alpha>-1$, the weighted Bergman space $\mathcal{A}^2_\alpha(\HP)$ contains those analytic functions $F:\HP\rightarrow\C$ for which
\[
\|F\|^2_{\A^2_\alpha(\HP)}:=\frac{1}{\pi}\int_{-\infty}^\infty\int_0^\infty x^\alpha|F(x+iy)|^2dxdy < \infty.
\]
For each $\alpha>-1$, the functions $\{k_\omega^\alpha;\omega\in\HP\}$ defined by
\begin{equation}
 k_\omega^\alpha(z):=\frac{2^\alpha(1+\alpha)}{(\overline{\omega}+z)^{2+\alpha}}, \qquad z \in \mathbb{H},
\end{equation}
are the reproducing kernels for $\A^2_\alpha(\HP)$. As such, they have the property that
\begin{equation}\label{KernelProperty}
 \left<f,k_\omega\right>_{\A^2_\alpha(\mathbb{H})}=f(\omega), \quad f\in\A_\alpha^2(\HP),{ } \omega\in\HP.
\end{equation}
In order to prove our result, we will show that a certain kernel is positive. We say a kernel $K(z,w)$ on $\mathbb H\times \mathbb H$ is positive if
\begin{equation*}
\sum_{i,j=1}^n c_i \overline{c_j} K(x_i, x_j) \geq 0
\end{equation*} 
for all $n\geq 1$, and all scalars $c_1, \dots c_n\in\C$ and points $x_1, \dots x_n\in \mathbb H$.

\begin{prop}[Nevanlinna] \label{Nevanlinna} A holomorphic function $\psi$ in
$\mathbb H$ has positive real part if and only if the kernel
\begin{equation*}
\frac{\psi(z)+\overline{\psi(w)}}{z+\overline{w}}
\end{equation*}
is positive.
\end{prop}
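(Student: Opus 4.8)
The plan is to prove the two implications separately, with the forward direction (positive real part $\Rightarrow$ positive kernel) carrying essentially all of the content. The reverse implication is immediate: if the kernel $K(z,w)=\frac{\psi(z)+\overline{\psi(w)}}{z+\overline{w}}$ is positive, then taking $n=1$, a single point $z\in\HP$, and scalar $c_1=1$ in the defining inequality gives
\[
0\le K(z,z)=\frac{\psi(z)+\overline{\psi(z)}}{z+\overline{z}}=\frac{\Re\psi(z)}{\Re z}.
\]
Since $\Re z>0$ on $\HP$, this forces $\Re\psi(z)\ge 0$, so $\psi$ has positive real part.

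For the forward direction, I would invoke the Herglotz--Nevanlinna integral representation of a function of positive real part on a half-plane. The real part $u=\Re\psi$ is a nonnegative harmonic function on $\HP$, and so admits a Poisson-type representation against a positive measure $\mu$ on the imaginary axis together with a nonnegative multiple of the harmonic measure at infinity. Recovering the holomorphic function from its real part (up to an additive imaginary constant) yields
\[
\psi(z)=i\beta+az+\frac{1}{\pi}\int_{\R}\left(\frac{1}{z-it}-\frac{it}{1+t^2}\right)d\mu(t),
\]
with $\beta\in\R$, $a\ge 0$, and $\mu\ge 0$ satisfying $\int(1+t^2)^{-1}\,d\mu(t)<\infty$; the second term inside the integral is a purely imaginary convergence factor.

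The verification is then a term-by-term computation. Forming $\psi(z)+\overline{\psi(w)}$, the constant $i\beta$ and each correction term $-it/(1+t^2)$ are purely imaginary and cancel against their conjugates; the linear term contributes $a(z+\overline{w})$; and a short calculation gives $\frac{1}{z-it}+\overline{\left(\frac{1}{w-it}\right)}=\frac{z+\overline{w}}{(z-it)\,\overline{(w-it)}}$. Dividing by $z+\overline{w}$ therefore collapses the whole expression to
\[
\frac{\psi(z)+\overline{\psi(w)}}{z+\overline{w}}=a+\frac{1}{\pi}\int_{\R}\frac{1}{(z-it)\,\overline{(w-it)}}\,d\mu(t).
\]
The constant kernel $a\ge 0$ is positive, and for each fixed $t$ the integrand is the rank-one kernel $g_t(z)\overline{g_t(w)}$ with $g_t(z)=(z-it)^{-1}$, hence positive; integrating positive kernels against the positive measure $\mu$ and summing preserves positivity, so $K$ is positive.

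The main obstacle is the forward direction, and specifically the justification of the integral representation together with its convergence: one must produce the positive measure $\mu$ (for instance via the Poisson representation of the nonnegative harmonic function $\Re\psi$, or by transporting the disc Herglotz theorem through a Cayley transform) and insert the imaginary correction term so that the defining integral converges under the natural growth condition $\int(1+t^2)^{-1}\,d\mu(t)<\infty$. Once the representation is in hand, the positivity of $K$ is a formal consequence, since the constant and correction terms are engineered to cancel in $\psi(z)+\overline{\psi(w)}$, leaving only manifestly positive pieces.
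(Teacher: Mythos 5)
Your proof is correct, but there is nothing in the paper to compare it against: the paper states this proposition as a classical result of Nevanlinna and never proves it, so your argument supplies what the authors only cite. The route you take is the standard one. The reverse direction ($n=1$, $c_1=1$, giving $K(z,z)=\Re\psi(z)/\Re z\ge 0$) is fine, and in the forward direction your algebra checks out: in $\psi(z)+\overline{\psi(w)}$ the constant $i\beta$ and the convergence factors $-it/(1+t^2)$ cancel, the identity $\frac{1}{z-it}+\overline{\left(\frac{1}{w-it}\right)}=\frac{z+\overline{w}}{(z-it)\overline{(w-it)}}$ holds, and the kernel collapses to $a+\frac{1}{\pi}\int_{\R}g_t(z)\overline{g_t(w)}\,d\mu(t)$ with $g_t(z)=(z-it)^{-1}$, a superposition of rank-one (hence positive) kernels integrated against a positive measure, which is positive; absolute convergence follows from the decay $|g_t(z)\overline{g_t(w)}|=O(t^{-2})$ together with $\int(1+t^2)^{-1}d\mu<\infty$. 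Two minor points. First, your reverse direction literally yields $\Re\psi\ge 0$; if ``positive real part'' is read strictly, the degenerate case $\psi\equiv ic$, $c\in\R$, produces the zero kernel while $\Re\psi\equiv 0$, so one should either read the proposition in the Herglotz-class sense $\Re\psi\ge 0$ (as the paper implicitly does, and as is standard) or note that a non-constant $\psi$ with $\Re\psi\ge 0$ has $\Re\psi>0$ by the open mapping theorem. Second, as you acknowledge, all the weight rests on the half-plane Herglotz--Nevanlinna representation; invoking it as a known theorem is legitimate, and your suggested justifications (Poisson representation of the nonnegative harmonic function $\Re\psi$, or transporting the disc Herglotz theorem through the Cayley transform) are exactly the standard ways to obtain it.
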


A sequence of points $z_n=x_n+iy_n$ in $\mathbb H$ is said to tend non-tangentially to $\infty$ if
\begin{enumerate}
 \item $x_n\to \infty$,
 \item the ratio $|y_n|/x_n$ is uniformly bounded.
\end{enumerate}
 We then say that a map $\phi:\mathbb H\to \mathbb H$ fixes infinity non-tangentially, and write $\phi(\infty)=\infty$, if $\phi(z_n)\to \infty$ whenever $z_n\to \infty$ non-tangentially.  If $\phi(\infty)=\infty$, we say that $\phi$ has a finite angular derivative at $\infty$ if the non-tangential limit
\begin{equation}\label{E:angdivH}
\lim_{z\to \infty} \frac{z}{\phi(z)}
\end{equation}
exists and is finite, under these circumstances, we write $\phi^\prime(\infty)$ for this quantity.

If we let $\psi$ be the self-map of $\mathbb D$ equivalent to $\phi$ via the standard M\"obius identification of the disc with the half-plane given by $\tau(\zeta)=\frac{1+\zeta}{1-\zeta}$, that is ${\psi = \tau^{-1} \circ\phi\circ\tau}$, then (\ref{E:angdivH}) is equal, by the Julia-Carath\'eodory theorem, the non-tangential limit of $\psi^\prime(\zeta)$ as $\zeta\to 1$, which is where the terminology comes from. Indeed, we have the following half-plane version of the Julia-Carath\'eodory theorem, proved in \cite{ElliottJury}.
\begin{lem}[Half plane Julia-Carath\'eodory theorem]  \label{AngularDeriv}
Let $\phi:\mathbb H\to \mathbb H$ be holomorphic.  The following are equivalent:
\begin{enumerate}
\item $\phi(\infty)=\infty$ and $\phi^\prime(\infty)$ exists;  
\item $\displaystyle{\sup_{z\in\mathbb H} \frac{\Re z}{\Re \phi(z)}<\infty}$;
\item $\displaystyle{\limsup_{z\to \infty} \frac{\Re z}{\Re \phi(z)}<\infty}$.
\end{enumerate}
Moreover the quantities in (2) and (3) are both equal to the angular derivative $\phi^\prime(\infty)$.
\end{lem}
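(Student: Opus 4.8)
The natural strategy is to transfer the entire problem to the unit disc via the Cayley map $\tau(\zeta)=\frac{1+\zeta}{1-\zeta}$ and appeal to the classical Julia--Carath\'eodory theorem together with Julia's lemma. Writing $\psi=\tau^{-1}\circ\phi\circ\tau$ and recording that $\tau(1)=\infty$, the first task is the elementary but crucial identity
\[
\Re\,\tau(\zeta)=\frac{1-|\zeta|^2}{|1-\zeta|^2},\qquad \zeta\in\mathbb{D},
\]
obtained by direct computation. Setting $z=\tau(\zeta)$ and using $\phi(z)=\tau(\psi(\zeta))$, this immediately yields the master formula
\[
\frac{\Re z}{\Re\phi(z)}=\frac{1-|\zeta|^2}{|1-\zeta|^2}\cdot\frac{|1-\psi(\zeta)|^2}{1-|\psi(\zeta)|^2}=:J(\zeta),
\]
which is precisely the Julia quotient of $\psi$ at the boundary point $1$. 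I would also note, as a standard property of M\"obius maps, that $z\to\infty$ non-tangentially in $\mathbb{H}$ corresponds exactly to $\zeta\to 1$ through a Stolz angle in $\mathbb{D}$, so that the half-plane notions of non-tangential limit and angular derivative transfer faithfully; in particular a short computation gives $\frac{z}{\phi(z)}\to\lim_{\zeta\to 1}\frac{1-\psi(\zeta)}{1-\zeta}$ radially, identifying $\phi'(\infty)$ with the disc angular derivative $\psi'(1)$.

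With these translations in hand the three conditions become statements about $\psi$ at $1$, and I would prove the cycle (1)$\Rightarrow$(2)$\Rightarrow$(3)$\Rightarrow$(1). For (1)$\Rightarrow$(2): condition (1) forces $\psi(1)=1$ with finite angular derivative $d=\psi'(1)=\phi'(\infty)$, so Julia's inequality $\frac{|1-\psi(\zeta)|^2}{1-|\psi(\zeta)|^2}\le d\,\frac{|1-\zeta|^2}{1-|\zeta|^2}$ holds throughout $\mathbb{D}$; this says exactly $J(\zeta)\le d$ for every $\zeta$, i.e. $\sup_{z}\frac{\Re z}{\Re\phi(z)}\le d<\infty$. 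The implication (2)$\Rightarrow$(3) is trivial, since a limsup is dominated by a supremum.

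The substantive implication is (3)$\Rightarrow$(1). I would first observe that (3) already forces the fixed point: if $\frac{\Re z}{\Re\phi(z)}\le L$ for all large $z$, then $\Re\phi(z)\ge \Re z/L\to\infty$ along any non-tangential approach, so $\phi(\infty)=\infty$. Restricting the resulting bound on $J$ to the radius $\zeta=r\in(0,1)$ and using $\frac{|1-\psi(r)|^2}{1-|\psi(r)|^2}\ge\frac{1-|\psi(r)|}{1+|\psi(r)|}$, a short estimate shows $\frac{1-|\psi(r)|}{1-r}$ remains bounded as $r\to 1$. Hence the unrestricted Julia--Carath\'eodory quantity $d:=\liminf_{\zeta\to 1}\frac{1-|\psi(\zeta)|}{1-|\zeta|}$ is finite, and the classical theorem delivers $\psi(1)=1$ together with the existence of the finite non-tangential limit $\psi'(1)=d$; transferred back this is precisely (1).

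Finally, to pin down the constants I would combine the two one-sided bounds already produced: Julia's inequality gives $\sup_\zeta J(\zeta)\le d$, while evaluating $J$ along the radius and invoking the established non-tangential limit $\frac{1-\psi(\zeta)}{1-\zeta}\to d$ gives $J(r)\to d$, so that
\[
\sup_{z}\frac{\Re z}{\Re\phi(z)}=\limsup_{z\to\infty}\frac{\Re z}{\Re\phi(z)}=d=\phi'(\infty),
\]
establishing the last assertion. The main obstacle is the passage from the purely local hypothesis (3) to the global bound (2): this is exactly where Julia's lemma is indispensable, as it converts the finiteness of the boundary derivative into a Schwarz--Pick-type inequality valid at every interior point, and care is needed to confirm that (3) forces $\psi(1)=1$ before the classical theorem can be invoked.
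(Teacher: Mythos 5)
Your proof is correct, and it takes essentially the approach of the paper's source: the paper does not prove this lemma itself but quotes it from \cite{ElliottJury}, where the argument is exactly this transfer to the disc via the Cayley map, the identity $\Re\,\tau(\zeta)=(1-|\zeta|^2)/|1-\zeta|^2$ converting $\Re z/\Re\phi(z)$ into the Julia quotient of $\psi$ at $1$, and an appeal to Julia's lemma together with the classical Julia--Carath\'eodory theorem. Your handling of the one delicate point---that condition (3) must first be seen to force $\psi(1)=1$ (via $\Re\phi(z)\to\infty$) before the classical theorem can be applied, and that the radial evaluation $J(r)\to d$ pins down equality of all three quantities---is sound.
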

\begin{lem}\label{KernelSums}
 Suppose that $K(\omega,z)$ is a positive kernel on $\mathbb{H} \times \mathbb{H}$ and let $c \geq 0$ be a positive constant. Then $\tilde K(\omega,z):= K(\omega,z)+ c$ is a positive kernel on $\mathbb{H} \times \mathbb{H}$.
\end{lem}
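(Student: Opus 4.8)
The plan is to verify the defining inequality directly, using the fact that positive kernels form a cone that is closed under addition. First I would observe that the constant kernel $c$ is itself positive: for any $n \geq 1$, scalars $c_1, \dots, c_n \in \C$ and points $x_1, \dots, x_n \in \HP$, one has
\[
\sum_{i,j=1}^n c_i \overline{c_j}\, c = c \Bigl| \sum_{i=1}^n c_i \Bigr|^2 \geq 0,
\]
since $c \geq 0$. The key algebraic point here is simply that the double sum factors, $\sum_{i,j} c_i \overline{c_j} = \bigl( \sum_i c_i \bigr)\bigl( \overline{\sum_j c_j} \bigr) = \bigl| \sum_i c_i \bigr|^2$, so the quadratic form associated to the constant kernel is a nonnegative multiple of a modulus squared.

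Next I would combine this observation with the hypothesis that $K$ is positive. For the same data, positivity of $\tilde K$ follows by splitting the sum into its two constituent kernels:
\[
\sum_{i,j=1}^n c_i \overline{c_j}\, \tilde K(x_i, x_j) = \sum_{i,j=1}^n c_i \overline{c_j}\, K(x_i, x_j) + c \Bigl| \sum_{i=1}^n c_i \Bigr|^2.
\]
The first term on the right is nonnegative because $K$ is assumed to be a positive kernel, and the second is nonnegative because $c \geq 0$; hence the left-hand side is nonnegative. Since $n$, the scalars, and the points were arbitrary, this is precisely the definition of $\tilde K$ being positive.

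I do not expect any genuine obstacle: the statement is an elementary consequence of the definition of positivity together with the fact that a sum of two positive kernels is positive. The only step requiring even minor care is recognising the constant kernel as positive, which reduces entirely to the factorisation identity displayed above.
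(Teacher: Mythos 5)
Your proof is correct, but it takes a genuinely more elementary route than the paper. The paper obtains positivity of the constant kernel $L(\omega,z)\equiv 1$ by invoking Proposition \ref{Nevanlinna}: since $\psi(z)=z$ has positive real part on $\mathbb{H}$, the associated Nevanlinna kernel $(\psi(z)+\overline{\psi(\omega)})/(z+\overline{\omega})$, which is identically $1$, is positive; the paper then writes $\tilde K = K + cL$ and concludes by additivity of positive kernels. You instead verify positivity of the constant kernel directly from the definition, via the factorisation $\sum_{i,j} c_i\overline{c_j} = \bigl|\sum_i c_i\bigr|^2$, and then add the two quadratic forms. The two arguments prove the same thing, but yours is self-contained and makes clear that the lemma is purely an exercise in the definition of positivity: in particular, your argument shows the statement holds for kernels on an arbitrary set, with no role played by $\mathbb{H}$ or by holomorphy, a generality that the paper's appeal to Nevanlinna's criterion obscures. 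The paper's version is shorter on the page only because it outsources the elementary computation to a quotable proposition already stated in the text.
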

\begin{proof}
Since the analytic function $\psi(z)=z$ on $\mathbb{H}$ has positive real part, Proposition \ref{Nevanlinna} implies that $L(\omega,z) \equiv 1$ is positive. Since $\tilde K = K + cL$, it follows that $\tilde K$ is positive. 
\end{proof}
\section{Main Results}
For a natural number $n \geq  1$ and a holomorphic function $\phi:\mathbb{H} \rightarrow \mathbb{H}$ with finite angular derivative $\lambda$ at infinity, we define the kernel $K^n(\omega,z)$ on $\mathbb{H} \times \mathbb{H}$ by
\begin{equation*}
K^n(\omega, z):= \frac{ ( \phi(z) + \overline{\phi(\omega)})^n - \lambda^{-n} (z + \bar \omega)^n}{(z + \bar \omega)^n}, \qquad \omega, z \in \mathbb{H}.
\end{equation*}
\begin{lem} \label{KerPos}
Suppose that $\phi:\mathbb{H} \rightarrow \mathbb{H}$ has finite angular derivative $0 < \lambda < \infty$ at infinity. Then for every natural number $n \geq 0$, the kernel $K^{2^n}$ is positive.
\end{lem}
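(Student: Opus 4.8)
The plan is to argue by induction on $n$, using the factorisation $a^{2}-b^{2}=(a-b)(a+b)$ to pass from $K^{2^n}$ to $K^{2^{n+1}}$, together with the fact that the pointwise product of positive kernels is positive.

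First I would treat the base case $n=0$, i.e.\ the positivity of $K^1$. Since $\lambda$ is real, one can write
\[
K^1(\omega,z)=\frac{\phi(z)+\overline{\phi(\omega)}-\lambda^{-1}(z+\bar\omega)}{z+\bar\omega}
=\frac{\psi(z)+\overline{\psi(\omega)}}{z+\bar\omega},
\qquad \psi(z):=\phi(z)-\lambda^{-1}z .
\]
By Proposition \ref{Nevanlinna}, $K^1$ is positive exactly when $\psi$ has positive real part, that is when $\Re\phi(z)\ge\lambda^{-1}\Re z$ for every $z\in\mathbb H$. This inequality is precisely the assertion that $\sup_{z\in\mathbb H}\Re z/\Re\phi(z)\le\lambda$, which holds by the half-plane Julia--Carath\'eodory theorem (Lemma \ref{AngularDeriv}), since that supremum equals $\phi'(\infty)=\lambda$.

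For the inductive step, set $m=2^n$ and assume $K^m$ is positive. Writing $a=\phi(z)+\overline{\phi(\omega)}$ and $b=\lambda^{-1}(z+\bar\omega)$, the identity $a^{2m}-b^{2m}=(a^m-b^m)(a^m+b^m)$ yields
\[
K^{2m}(\omega,z)
=\frac{a^m-b^m}{(z+\bar\omega)^m}\cdot\frac{a^m+b^m}{(z+\bar\omega)^m}
=K^m(\omega,z)\,\bigl(K^m(\omega,z)+2\lambda^{-m}\bigr),
\]
where the first factor is $K^m$ by definition and the second factor is seen to equal $K^m+2\lambda^{-m}$ after a short computation. By Lemma \ref{KernelSums} the second factor is positive, being $K^m$ plus a non-negative constant, and $K^m$ itself is positive by the inductive hypothesis. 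The Schur product theorem then shows that the pointwise product $K^{2m}=K^{2^{n+1}}$ is positive, closing the induction.

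I expect the base case to be the only genuinely delicate point, since it is there that the hypothesis on the angular derivative must be converted, via Julia--Carath\'eodory, into the pointwise inequality $\Re\phi(z)\ge\lambda^{-1}\Re z$ needed for Nevanlinna's criterion. The factorisation---which is exactly what restricts the argument to exponents of the form $2^n$---and the appeal to the Schur product theorem are then purely formal.
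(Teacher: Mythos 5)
Your proof is correct and its inductive step is exactly the paper's: the same difference-of-squares factorisation $K^{2^{n+1}}=K^{2^n}\bigl(K^{2^n}+2\lambda^{-2^n}\bigr)$, followed by Lemma \ref{KernelSums} and the Schur product theorem. The only difference is the base case, which the paper disposes of by citing \cite{ElliottJury}, whereas you prove it directly by writing $K^1$ as the Nevanlinna kernel of $\psi(z)=\phi(z)-\lambda^{-1}z$ and invoking Proposition \ref{Nevanlinna} together with Lemma \ref{AngularDeriv}; this is precisely the argument of that reference, so your write-up is simply more self-contained.
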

\begin{proof}
It is shown in \cite{ElliottJury} that $K^{1}$ is positive. Now suppose that $K^{2^n}$ is positive for some natural number $n \geq 1$. Then, using the fact that the numerator of $K^{2^{n+1}}$ is the difference of two squares,
\begin{align*}
K^{2^{n+1}}(& \omega,z)  =  \frac{ \left( ( \phi(z) + \overline{\phi(\omega)})^{2^{n}} \right)^2 - \left( \lambda^{-2^{n}}(z + \bar \omega)^{2^{n}}  \right)^2}{ (z + \bar \omega)^{2^{n+1}}}\\
& =  \frac{ (\phi(z) + \overline{\phi(\omega)})^{2^{n}} - \lambda^{-2^{n}}(z + \bar \omega)^{2^{n}} }{ (z + \bar \omega)^{2^{n}}} \cdot \frac{ (\phi(z) + \overline{\phi(\omega)})^{2^n} + \lambda^{-2^n}(z + \bar \omega)^{2^n} }{ (z + \bar \omega)^{2^n}} \\
& =  K^{2^n}(\omega,z) \cdot \left( \frac{ (\phi(z) + \overline{\phi(\omega)})^{2^n}}{ (z + \bar \omega)^{2^n}} +  \lambda^{-2^n} \right)\\
& =  K^{2^{n}}(\omega,z) \left( K^{2^{n}}(\omega,z) + 2 \cdot \lambda^{-2^n} \right).
\end{align*}
By assumption that $K^{2^n}$ is positive and Lemma \ref{KernelSums}, this is the product of two positive kernels, and hence, $K^{2^{n+1}}$ is positive by the Schur product theorem \cite{AglerMccarthy}. The result now follows by induction.
\end{proof}
As a result of Lemma \ref{KerPos}, it is possible to provide conditions for boundedness of composition operators on weighted Bergman spaces, for certain integer weights. 

\begin{prop}\label{Prop:2n-2}
Let $\phi : \mathbb{H} \rightarrow \mathbb{H}$ be holomorphic and let $n \geq 1$ be a natural number. The composition operator $C_\phi : \mathcal{A}_{2^n-2}^2(\mathbb{H}) \rightarrow \mathcal{A}_{2^n-2}^2(\mathbb{H})$ is bounded if and only if $\phi$ has finite angular derivative $0 < \lambda < \infty$ at infinity, in which case $\|C_\phi\| = \lambda^{2^{n-1}}$. 
\end{prop}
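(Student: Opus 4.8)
The plan is to prove the two implications separately, in both cases exploiting the fact that for $\alpha = 2^n-2$ the reproducing kernel exponent is $2+\alpha = 2^n$, together with the elementary identity $C_\phi^* k^\alpha_\omega = k^\alpha_{\phi(\omega)}$. This identity is valid on any weighted Bergman space whenever $C_\phi$ is bounded, since the reproducing property gives $\langle f, C_\phi^* k^\alpha_\omega\rangle = (C_\phi f)(\omega) = f(\phi(\omega)) = \langle f, k^\alpha_{\phi(\omega)}\rangle$ for all $f$. A direct computation from the definition of $k^\alpha_\omega$ yields $\|k^\alpha_\omega\|^2 = k^\alpha_\omega(\omega) = (1+\alpha)/\bigl(4(\Re\omega)^{2+\alpha}\bigr)$, and this is the quantity that couples the operator norm to the angular derivative.

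For the necessity of the angular-derivative condition, together with the lower bound on the norm, I would assume $C_\phi$ is bounded and apply the adjoint to a single kernel. From $\|k^\alpha_{\phi(\omega)}\| = \|C_\phi^* k^\alpha_\omega\| \le \|C_\phi\|\,\|k^\alpha_\omega\|$ and the norm formula above one obtains
\[
\left(\frac{\Re\omega}{\Re\phi(\omega)}\right)^{(2+\alpha)/2} \le \|C_\phi\|, \qquad \omega \in \mathbb{H}.
\]
Taking the supremum over $\omega$ and invoking part (2) of Lemma~\ref{AngularDeriv} shows that $\phi$ has a finite angular derivative $\lambda = \sup_{\omega}\Re\omega/\Re\phi(\omega)$, which is automatically positive as a supremum of positive numbers, and simultaneously delivers the bound $\lambda^{(2+\alpha)/2} = \lambda^{2^{n-1}} \le \|C_\phi\|$.

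For the converse and the matching upper bound, I would suppose $\phi$ has finite angular derivative $0<\lambda<\infty$ and set $M = \lambda^{2^{n-1}}$. The goal is to show that the kernel $N(\omega,z) := M^2 k^\alpha_\omega(z) - k^\alpha_{\phi(\omega)}(\phi(z))$ is positive. The key algebraic observation is that, because $2+\alpha = 2^n$ and $M^2 = \lambda^{2^n}$, this kernel factors as
\[
N(\omega,z) = 2^\alpha(1+\alpha)\,\lambda^{2^n}\,\frac{1}{(\overline{\phi(\omega)}+\phi(z))^{2^n}}\,K^{2^n}(\omega,z).
\]
The first factor is a positive constant times $k^\alpha_{\phi(\omega)}(\phi(z))$, which is positive since reproducing kernels are positive and positivity is preserved under composition with $\phi$; the second factor is positive by Lemma~\ref{KerPos}. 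Hence $N$ is positive by the Schur product theorem. Positivity of $N$ says exactly that $\bigl\|\sum_i c_i k^\alpha_{\phi(\omega_i)}\bigr\|^2 \le M^2 \bigl\|\sum_i c_i k^\alpha_{\omega_i}\bigr\|^2$ for every finite configuration, so the map $k^\alpha_\omega \mapsto k^\alpha_{\phi(\omega)}$ is well defined and bounded of norm at most $M$ on the dense span of the kernels, and therefore extends to a bounded operator $T$ with $\|T\|\le M$. Identifying $T = C_\phi^*$ as above shows $C_\phi$ is bounded with $\|C_\phi\| = \|T\| \le \lambda^{2^{n-1}}$. Combining with the lower bound gives equality.

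I expect the main obstacle to be the converse direction: first spotting the factorization that exhibits $N$ as a Schur product of the Lemma~\ref{KerPos} kernel $K^{2^n}$ with an explicitly positive kernel, and then carefully passing from positivity of $N$ on finite sets of kernels to genuine boundedness of $C_\phi$ itself, namely the well-definedness of $T$ on the span, its bounded extension, and its identification with $C_\phi^*$. By contrast, the necessity direction and the lower bound are routine once the kernel-norm formula is in hand.
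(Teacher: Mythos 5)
Your proposal is correct and follows essentially the same route as the paper: the same kernel $\lambda^{2^n}\langle k_\omega^\alpha, k_z^\alpha\rangle - \langle k_{\phi(\omega)}^\alpha, k_{\phi(z)}^\alpha\rangle$, the same factorization into $\lambda^{2^n}\, k^\alpha_{\phi(\omega)}(\phi(z)) \cdot K^{2^n}(\omega,z)$ combined with Lemma~\ref{KerPos} and the Schur product theorem for the upper bound, and the same kernel-norm comparison via Lemma~\ref{AngularDeriv} for the necessity and the matching lower bound. The only difference is that you spell out the passage from positivity of the kernel to boundedness of $C_\phi$ (well-definedness of $T$ on the span of kernels, its bounded extension, and its identification with $C_\phi^\ast$), a step the paper simply imports by citing \cite{ElliottJury}.
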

\begin{proof}
Let $n \geq 1$ be a natural number and define $\alpha:=2^n-2$. Following \cite{ElliottJury}, if it can be shown that 
\begin{equation} \label{ShowPos}
\lambda^{2^n} \langle k_\omega^\alpha, k_z^\alpha \rangle_{\mathcal{A}_\alpha^2(\mathbb{H})} - \langle C_\phi^\ast k_\omega^\alpha, C_\phi^\ast k_z^\alpha \rangle_{\mathcal{A}_\alpha^2(\mathbb{H})}
\end{equation}
is a positive kernel, then $C_\phi : \mathcal{A}_\alpha^2(\mathbb{H}) \rightarrow \mathcal{A}_\alpha^2(\mathbb{H})$ is bounded with $\|C_\phi\| \leq \lambda^{2^{n-1}}$. Using the fact that $C_\phi^\ast k_\omega^\alpha = k^\alpha_{\phi(\omega)}$ and (\ref{KernelProperty}), it follows that (\ref{ShowPos}) is equal to 
\begin{equation*}
2^\alpha(1+\alpha) \left(\frac{\lambda^{2^n}}{(z+\bar \omega)^{2^n}} - \frac{1}{(\phi(z) + \overline{\phi(\omega)})^{2^n}}\right). 
\end{equation*}
This can easily be seen to factorise as
\begin{equation*}
 \lambda^{2^n} \frac{ 2^\alpha(1+\alpha)}{(\phi(z) + \overline{\phi(\omega)})^{2^n}} \cdot \frac{ (\phi(z) + \overline{\phi(\omega)})^{2^n} - \lambda^{-2^n} (z + \bar \omega)^{2^n}}{ (z + \bar \omega )^{2^n}},
\end{equation*}
which is just
\begin{equation*}
\lambda^{2^n} \langle k_{\phi(\omega)}^\alpha , k_{\phi(z)}^\alpha \rangle_{\mathcal{A}_\alpha^2(\mathbb{H})} \cdot K^{2^n}(\omega,z).
\end{equation*}
This is positive, being the product of positive kernels and positive scalars.

For the converse, the calculation is similar to the Hardy space case. If the composition operator $C_\phi : \mathcal{A}_\alpha^2(\mathbb{H}) \rightarrow \mathcal{A}_\alpha^2(\mathbb{H})$ is bounded and $\|C_\phi\| \leq M$ then, 
\begin{align*}
\frac{2^\alpha(1+\alpha)}{2^{2+\alpha}( \Re \phi(z))^{2+\alpha} }  
 =  \|k_{\phi(z)}^\alpha \|_{\mathcal{A}_\alpha^2(\mathbb{H})}^2 
& =  \|C_\phi^\ast k_z^\alpha \|_{\mathcal{A}_\alpha^2(\mathbb{H})}^2 \\
& \leq  M^2 \|k_z^\alpha\|_{\mathcal{A}_\alpha^2(\mathbb{H})}^2 \\
& =  M^2 \frac{2^\alpha(1+\alpha)}{2^{2+\alpha} (\Re z)^{2+\alpha}}. 
\end{align*}
As such,
\[
\frac{\Re(z)}{\Re(\phi(z))} \le M^{2/(2+\alpha)},
\]
hence by Lemma \ref{AngularDeriv}, $\phi$ has finite angular derivative 
\[
\phi'(\infty) = \lambda \leq \|C_\phi\|^{2/(2+\alpha)} = \|C_\phi\|^{2^{-(n-1)}}.
\]
By the first part of the proof, the norm of $C_\phi$ must be at most $\lambda^{2^{n-1}}$, and by the second part it must be at least that large. It follows that indeed
\[
\|C_\phi\| = \lambda^{2^{n-1}}.
\]
\end{proof}

Proposition \ref{Prop:2n-2} tells us that the result holds for particular integral values of $\alpha$ of arbitrarily large size. We proceed by interpolating for the spaces $\A_\alpha^2(\mathbb{H})$, where $2^n<\alpha<2^{n+1}$. The following weighted version of the Paley-Wiener Theorem (see \cite{Duren} or \cite{GuallardoPartingtonSegura}) will be useful.
\begin{lem} \label{PaleyWiener}
The Bergman space $\A_\alpha^2(\mathbb{H})$ is isometrically isomorphic, via the Laplace transform $\mathcal{L}$, to the space $L^2(\R_+,d\mu_\alpha)$. Here, 
\[
 d\mu_\alpha = \frac{\Gamma(1+\alpha)}{2^\alpha t^{\alpha+1}} dt,
\]
and $dt$ is Lebesgue measure on $\mathbb{R}_+:=(0,\infty)$. 
\end{lem}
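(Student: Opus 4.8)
The plan is to realize the Laplace transform $\mathcal{L}f(z)=\int_0^\infty f(t)e^{-zt}\,dt$ as an isometric isomorphism of $L^2(\R_+,d\mu_\alpha)$ onto $\A_\alpha^2(\HP)$; the claimed isomorphism is then $\mathcal{L}$ (or, equivalently, its inverse). First I would verify that $\mathcal{L}f$ is a well-defined holomorphic function on $\HP$ for every $f\in L^2(\R_+,d\mu_\alpha)$. Writing the density of $\mu_\alpha$ as $w(t)=\Gamma(1+\alpha)/(2^\alpha t^{\alpha+1})$ and applying the Cauchy--Schwarz inequality with respect to this weight gives, for $\Re z=x>0$,
\[
\int_0^\infty |f(t)|\,e^{-xt}\,dt \le \|f\|_{L^2(d\mu_\alpha)}\left(\int_0^\infty e^{-2xt}\,w(t)^{-1}\,dt\right)^{1/2}.
\]
Since $w(t)^{-1}$ is a constant multiple of $t^{\alpha+1}$, the last integral is a finite multiple of $x^{-(\alpha+2)}$, so the defining integral converges absolutely and locally uniformly on $\HP$, and differentiation under the integral sign shows $\mathcal{L}f$ is holomorphic.

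The central step is the isometry $\|\mathcal{L}f\|_{\A_\alpha^2(\HP)}^2=\|f\|_{L^2(d\mu_\alpha)}^2$. I would first prove this for a dense, well-behaved class of $f$, say those bounded and compactly supported in $(0,\infty)$, where all interchanges of integration are legitimate. For such $f$, the map $y\mapsto \mathcal{L}f(x+iy)$ is the Fourier transform of $t\mapsto f(t)e^{-xt}$ (extended by zero to the negative axis), so Plancherel's theorem gives
\[
\int_{-\infty}^\infty |\mathcal{L}f(x+iy)|^2\,dy = 2\pi\int_0^\infty |f(t)|^2 e^{-2xt}\,dt.
\]
Multiplying by $x^\alpha/\pi$, integrating in $x$, applying Fubini, and using $\int_0^\infty x^\alpha e^{-2xt}\,dx=\Gamma(1+\alpha)/(2t)^{1+\alpha}$ collapses the right-hand side to exactly $\|f\|_{L^2(d\mu_\alpha)}^2$. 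A density argument then extends the identity to all of $L^2(d\mu_\alpha)$; in particular $\mathcal{L}$ is injective with closed range.

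It remains to show that $\mathcal{L}$ is surjective, i.e.\ that its range is dense (hence, being closed, all of $\A_\alpha^2(\HP)$). Here I would exhibit the reproducing kernels inside the range: expanding $(\overline\omega+z)^{-(2+\alpha)}$ as a Gamma integral yields
\[
k_\omega^\alpha(z)=\frac{2^\alpha(1+\alpha)}{(\overline\omega+z)^{2+\alpha}}=\mathcal{L}f_\omega(z),\qquad f_\omega(t)=\frac{2^\alpha(1+\alpha)}{\Gamma(2+\alpha)}\,t^{1+\alpha}e^{-\overline\omega t},
\]
and a direct computation shows $f_\omega\in L^2(d\mu_\alpha)$, since the relevant integral is a finite multiple of $\int_0^\infty t^{1+\alpha}e^{-2(\Re\omega)t}\,dt$. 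As the linear span of $\{k_\omega^\alpha:\omega\in\HP\}$ is dense in the reproducing kernel Hilbert space $\A_\alpha^2(\HP)$---any function orthogonal to every $k_\omega^\alpha$ vanishes identically by (\ref{KernelProperty})---the range of $\mathcal{L}$ is dense, and the proof is complete.

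I expect the main obstacle to be the isometry step, and specifically the rigorous justification of the Fubini interchanges together with the passage from the formal relation $\int_{\R}e^{-iy(t-s)}\,dy=2\pi\delta(t-s)$ to an honest statement. Routing this through Plancherel's theorem on the dense class of bounded, compactly supported $f$ and then extending by continuity is the cleanest route, as it replaces the delta-function heuristic by a genuine $L^2$ identity. The surjectivity step, by contrast, is painless once one observes that the kernels lie in the range.
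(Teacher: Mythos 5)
Your proposal is correct, but there is no proof in the paper to compare it against: the paper states Lemma \ref{PaleyWiener} as a known result and simply cites \cite{Duren} and \cite{GuallardoPartingtonSegura}. Your argument supplies a genuine, self-contained proof, and the isometry step is the standard one: fibre-wise Plancherel in the $y$-variable followed by Fubini against $x^\alpha\,dx$, using $\int_0^\infty x^\alpha e^{-2xt}\,dx=\Gamma(1+\alpha)/(2t)^{1+\alpha}$ — the constants do check out exactly against the normalisation $\frac{1}{\pi}\int\int x^\alpha|F|^2\,dx\,dy$ and the weight $\Gamma(1+\alpha)/(2^\alpha t^{1+\alpha})$. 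Where you genuinely diverge from the cited sources is surjectivity: the classical proofs establish the ``hard direction'' by Fourier-analytic means, showing directly that every Bergman function admits a Laplace representation, whereas you sidestep this by noting that the range of the isometry $\mathcal{L}$ is closed and contains every reproducing kernel $k_\omega^\alpha$ (via the Gamma-integral identity), and that the span of the kernels is dense by (\ref{KernelProperty}). This is a slicker route, and it is consistent with the paper's logical structure since (\ref{KernelProperty}) is assumed there from the outset; the one caveat is that your proof is then conditional on the reproducing property being established independently of the Paley--Wiener theorem (which is possible, e.g.\ by conformal transfer from the disc Bergman space), as otherwise the argument would be circular in developments where the kernel formula is itself derived from the Laplace-transform model. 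One small step you should write out explicitly: in the density extension, you must identify the $\A_\alpha^2(\HP)$-norm limit of $\mathcal{L}f_n$ with the pointwise-defined integral $\mathcal{L}f$; this follows because norm convergence implies pointwise convergence in a reproducing kernel Hilbert space, while your opening Cauchy--Schwarz estimate shows $\mathcal{L}f_n\to\mathcal{L}f$ pointwise, so the two limits agree.
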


\begin{thm}\label{UpperBound}
Let $\phi : \mathbb{H} \rightarrow \mathbb{H}$ be holomorphic and $\alpha>-1$. The composition operator $C_\phi : \mathcal{A}_\alpha^2(\mathbb{H}) \rightarrow \mathcal{A}_\alpha^2(\mathbb{H})$ is bounded if and only if $\phi$ has finite angular derivative $0 < \lambda < \infty$ at infinity, in which case $\|C_\phi\| = \lambda^{(2+\alpha)/2}$. 
\end{thm}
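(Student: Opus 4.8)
The plan is to split the statement into its two implications and to obtain the norm equality as two matching inequalities, one of which needs no interpolation at all. The converse direction, together with the lower bound $\|C_\phi\| \ge \lambda^{(2+\alpha)/2}$, can be extracted exactly as in the converse half of Proposition \ref{Prop:2n-2}, and this argument already works for every $\alpha>-1$: the reproducing kernels $k_\omega^\alpha$ exist throughout the scale, and from $C_\phi^\ast k_z^\alpha = k^\alpha_{\phi(z)}$ and $\|k_z^\alpha\|^2 = 2^\alpha(1+\alpha)/\bigl(2^{2+\alpha}(\Re z)^{2+\alpha}\bigr)$ one deduces, whenever $\|C_\phi\|\le M$,
\[
\frac{\Re z}{\Re \phi(z)} \le M^{2/(2+\alpha)}, \qquad z\in\HP.
\]
By Lemma \ref{AngularDeriv} this both forces $\phi$ to have a finite angular derivative $\lambda$ (necessity of the angular-derivative condition) and yields $\lambda\le M^{2/(2+\alpha)}$, i.e. $\|C_\phi\| \ge \lambda^{(2+\alpha)/2}$.

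For the forward direction and the matching upper bound I would interpolate along the scale $\{\A^2_\alpha(\HP)\}$. By Proposition \ref{Prop:2n-2} — and, for $n=0$, by the known Hardy-space result of \cite{ElliottJury}, where $\lambda^{2^{n-1}}=\sqrt\lambda = \lambda^{(2+(-1))/2}$ — the theorem already holds at the weights $\alpha_n:=2^n-2$ for all $n\ge 0$, namely $-1,0,2,6,14,\dots$, with $\|C_\phi\|_{\A^2_{\alpha_n}} = \lambda^{(2+\alpha_n)/2}$. These increase without bound, so every $\alpha>-1$ not of this form lies in a unique interval $(\alpha_n,\alpha_{n+1})$; fix such an $\alpha$ and $\theta\in(0,1)$ with $\alpha=(1-\theta)\alpha_n+\theta\alpha_{n+1}$. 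Via the Paley-Wiener isometry of Lemma \ref{PaleyWiener}, implemented by the single $\alpha$-independent Laplace transform $\mathcal L$, the couple $(\A^2_{\alpha_n},\A^2_{\alpha_{n+1}})$ is isometrically isomorphic to $\bigl(L^2(\R_+,d\mu_{\alpha_n}),L^2(\R_+,d\mu_{\alpha_{n+1}})\bigr)$. Since the densities are, up to positive constants, the power weights $t^{-(\beta+1)}$, the Stein-Weiss theorem on complex interpolation of weighted $L^2$-spaces gives $[L^2(d\mu_{\alpha_n}),L^2(d\mu_{\alpha_{n+1}})]_\theta = L^2(d\mu_\alpha)$ with equivalent norm, whence
\[
[\A^2_{\alpha_n}(\HP),\,\A^2_{\alpha_{n+1}}(\HP)]_\theta = \A^2_\alpha(\HP).
\]

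Assuming $\phi$ has finite angular derivative $\lambda$, the operator $C_\phi$ is the one and the same composition map on every space of the couple and is bounded at both endpoints. Calderón's complex interpolation theorem then gives the geometric-mean bound
\[
\|C_\phi\|_{\A^2_\alpha} \le \lambda^{(2+\alpha_n)(1-\theta)/2}\,\lambda^{(2+\alpha_{n+1})\theta/2} = \lambda^{(2+\alpha)/2},
\]
using $(2+\alpha_n)(1-\theta)+(2+\alpha_{n+1})\theta = 2+\alpha$. Combined with the lower bound from the first paragraph, this yields $\|C_\phi\| = \lambda^{(2+\alpha)/2}$ and settles both implications for all $\alpha>-1$.

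The main obstacle is the rigorous identification $[\A^2_{\alpha_n},\A^2_{\alpha_{n+1}}]_\theta = \A^2_\alpha$. Two points need care. First, one must check that the two Bergman spaces genuinely form a compatible couple realised through the \emph{same} map, so that interpolation transfers faithfully to the weighted $L^2$-couple; here it is essential that $\mathcal L^{-1}$ does not depend on $\alpha$, only the target measure does. Second, the constant prefactors $\Gamma(1+\beta)/2^\beta$ in $d\mu_\beta$ do \emph{not} interpolate to $\Gamma(1+\alpha)/2^\alpha$, so one must verify that they alter the norm only by a bounded factor and hence leave the interpolation space unchanged. The \emph{sharp} value of the norm then survives precisely because Calderón's theorem supplies the exact geometric-mean upper bound while the lower bound is obtained independently and exactly.
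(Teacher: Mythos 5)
Your proposal is correct and follows essentially the same route as the paper's proof: necessity and the lower bound $\|C_\phi\|\ge\lambda^{(2+\alpha)/2}$ come from the reproducing-kernel and Julia--Carath\'eodory argument of Proposition \ref{Prop:2n-2}, and the upper bound comes from transferring $C_\phi$ through the Paley--Wiener isometry and applying Stein--Weiss interpolation between the endpoint weights $2^n-2$ and $2^{n+1}-2$ (with the Hardy space as the $n=0$ endpoint). One clarification on the point you flag as the main obstacle: the sharp constant survives not merely because Calder\'on's theorem gives an exact geometric-mean bound, but because the interpolated weight differs from $d\mu_\alpha$ by a \emph{multiplicative constant}, so the two norms are proportional and the operator norm of a map from the space to itself is literally unchanged --- a mere two-sided equivalence of norms would have degraded the sharp constant --- and this is precisely the explicit computation with the $\Gamma$-factor ratios that the paper carries out to close the argument.
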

\begin{proof}
Let $\alpha > -1$. By Proposition \ref{Prop:2n-2}, the result holds if $\alpha$ is of the form  $\alpha=2^n-2$. Hence, it may be assumed without loss of generality that there exists a natural number $n \geq 0$ such that $\alpha \in (2^n - 2, 2^{n+1}-2)$. Write $A:=2^n-2$, $B:=2^{n+1} -2$. In the following, for simplicity, write $L^2(d\mu)$ for $L^2(\mathbb{R}_+,d\mu)$. Define a linear operator 
\begin{equation*}
T:L^2(d\mu_A) \rightarrow L^2(d\mu_A); \qquad T:L^2(d\mu_B) \rightarrow L^2(d\mu_B)
\end{equation*}
by $T := \mathcal{L}^{-1} \circ C_\phi \circ \mathcal{L}$. Since $\mathcal{L}$ is an isometric isomorphism between the respective spaces (Lemma \ref{PaleyWiener}), Proposition \ref{Prop:2n-2} implies that 
\begin{align*}
\|T\|_{L^2(d\mu_A) \rightarrow L^2(d\mu_A)} &= \|C_\phi\|_{\mathcal{A}_A^2(\mathbb{H}) \rightarrow \mathcal{A}_A^2(\mathbb{H})} = \lambda^{2^{n-1}} = \lambda^{(2+A)/2};\\
\|T\|_{L^2(d\mu_B) \rightarrow L^2(d\mu_B)} &= \|C_\phi\|_{\mathcal{A}_B^2(\mathbb{H}) \rightarrow \mathcal{A}_B^2(\mathbb{H})} = \lambda^{2^n} = \lambda^{(2+B)/2}.
\end{align*}
(Note that in the case $n=0$, $\mathcal{A}^2_A(\mathbb{H})$ should be replaced by the Hardy space $H^2(\mathbb{H})$). 
Since $\alpha \in (A,B)$, there exists $\theta \in (0,1)$ such that $\alpha = A(1-\theta) + B \theta$. By the Stein-Weiss interpolation theorem \cite[Corollary 5.5.4]{BerghLofstrom},
\begin{equation} \label{Interp}
\|T\|_{L^2(dw) \rightarrow L^2(dw)} \leq \lambda^{(2+A)(1-\theta)/2} \lambda^{(2+B)\theta/2} = \lambda^{(2+\alpha)/2},
\end{equation}
where
\begin{equation*}
dw = \frac{ \Gamma(1+A)^{1-\theta} \Gamma(1+B)^\theta}{2^{A(1-\theta)+ B\theta} t^{A(1-\theta)+B\theta +1}} dt = \frac{ \Gamma(1+A)^{1-\theta} \Gamma(1+B)^\theta}{2^\alpha t^{1+\alpha}} dt
\end{equation*}
By Lemma \ref{PaleyWiener}, for any $g \in \mathcal{A}_\alpha^2(\mathbb{H})$ there exists $f \in L^2(d\mu_\alpha)$ such that $\mathcal{L}f =g $ and $\|g\|_{\mathcal{A}_\alpha^2(\mathbb{H})} = \|f\|_{L^2(d\mu_\alpha)}$. Thus,
\begin{align*}
\|C_\phi g\|_{\mathcal{A}_\alpha^2(\mathbb{H})}  =  \|C_\phi (\mathcal{L}f)\|_{\mathcal{A}_\alpha^2(\mathbb{H})} & =  \| \mathcal{L}( T f) \|_{\mathcal{A}_\alpha^2(\mathbb{H})}\\
& =  \|Tf\|_{L^2(d\mu_\alpha)}\\
& =  \frac{\Gamma(1+\alpha)^{1/2}}{\Gamma(1+A)^{(1-\theta)/2} \Gamma(1+B)^{\theta/2}} \|Tf\|_{L^2(dw)}\\
\text{(by (\ref{Interp}))}  & \leq   \frac{\lambda^{(2+\alpha)/2}  \Gamma(1+\alpha)^{1/2}}{\Gamma(1+A)^{(1-\theta)/2} \Gamma(1+B)^{\theta/2}} \|f\|_{L^2(dw)}\\
& =  \lambda^{(2+\alpha)/2}  \|f\|_{L^2(d\mu_\alpha)}\\
& =  \lambda^{(2+\alpha)/2}  \|g\|_{\mathcal{A}_\alpha^2(\mathbb{H})}.
\end{align*}
As such, $C_\phi$ is bounded with $\|C_\phi\|\le\lambda^{(2+\alpha)/2}$.

For the converse assume that $C_\phi$ is bounded. Then by exactly the same proof as the second half of Proposition \ref{Prop:2n-2}, it follows that $\phi$ has finite angular derivative $\lambda$ and that $\|C_\phi\| \geq \lambda^{(2+\alpha)/2}$.
\end{proof}

The following results, concerning the spectral radius and essential norm of $C_\phi$, can be deduced from Theorem \ref{UpperBound} by the methods used in \cite{ElliottJury} for the Hardy space $H^2(\mathbb{H})$. 

\begin{thm}
 If $C_\phi$ is bounded on $\A_\alpha^2(\mathbb{H})$, then its spectral radius and norm are equal.
\end{thm}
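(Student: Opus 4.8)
The plan is to exploit the multiplicative behaviour of both the iterates of $C_\phi$ and the angular derivative under composition, together with the exact norm formula of Theorem \ref{UpperBound}; this is essentially the argument used for $H^2(\mathbb{H})$ in \cite{ElliottJury}. Recall that the spectral radius is given by the Gelfand formula $r(C_\phi) = \lim_{n\to\infty}\|C_\phi^n\|^{1/n}$ and always satisfies $r(C_\phi) \le \|C_\phi\|$, so it suffices to obtain the reverse inequality; in fact I expect to compute $\|C_\phi^n\|$ exactly for every $n$. Since $C_\phi$ is assumed bounded, Theorem \ref{UpperBound} already gives that $\phi$ fixes infinity non-tangentially with finite angular derivative $0<\lambda<\infty$ and that $\|C_\phi\| = \lambda^{(2+\alpha)/2}$.

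First I would observe that $C_\phi^n = C_{\phi_n}$, where $\phi_n := \phi \circ \phi \circ \cdots \circ \phi$ denotes the $n$-fold iterate of $\phi$, a holomorphic self-map of $\mathbb{H}$. The central claim is that $\phi_n$ again fixes infinity non-tangentially, with angular derivative $\phi_n'(\infty) = \lambda^n$, so that Theorem \ref{UpperBound} may be applied to it. To prove this I would use the chain rule for the angular derivative at infinity. Writing $\phi_2 = \phi\circ\phi$, one factors
\[
\frac{z}{\phi_2(z)} = \frac{z}{\phi(z)} \cdot \frac{\phi(z)}{\phi(\phi(z))},
\]
so that, provided $\phi(z) \to \infty$ non-tangentially whenever $z \to \infty$ non-tangentially, substituting $w = \phi(z)$ in the second factor shows that both factors tend to $\lambda$, whence $\phi_2'(\infty) = \lambda^2$; iterating gives $\phi_n'(\infty) = \lambda^n$. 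The step that needs genuine justification — and which I expect to be the main obstacle — is precisely that $\phi$ carries non-tangential sequences tending to $\infty$ to non-tangential sequences tending to $\infty$. This is where the fact that $\lambda = \phi'(\infty)$ is a positive real number (Lemma \ref{AngularDeriv}) is used: from $z/\phi(z) \to \lambda$ together with Lemma \ref{AngularDeriv}(2) one gets $\Re \phi(z) \ge \Re(z)/\lambda \to \infty$, and writing $\phi(z) = (z/\lambda)(1+o(1))$ one checks that $|\Im \phi(z)|/\Re \phi(z)$ stays bounded along any non-tangential sequence, since $|z| = O(\Re z)$ there.

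Granting the claim, Theorem \ref{UpperBound} applied to the bounded operator $C_\phi^n = C_{\phi_n}$ yields
\[
\|C_\phi^n\| = \|C_{\phi_n}\| = (\lambda^n)^{(2+\alpha)/2} = \left(\lambda^{(2+\alpha)/2}\right)^n = \|C_\phi\|^n.
\]
Taking $n$-th roots and letting $n \to \infty$ gives $r(C_\phi) = \lim_{n\to\infty}\|C_\phi\| = \|C_\phi\|$, which is the desired equality. Apart from the angular-derivative chain rule, the only remaining points are routine: the identity $C_\phi^n = C_{\phi_n}$ and the observation that each $\phi_n$ induces a bounded operator, both of which are immediate.
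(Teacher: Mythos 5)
Your proof is correct and takes essentially the same route as the paper, which simply defers this result to the methods of \cite{ElliottJury}: namely the identity $C_\phi^n = C_{\phi_n}$, the chain rule $\phi_n'(\infty) = \lambda^n$ (including the verification, via Lemma \ref{AngularDeriv}, that $\phi$ carries non-tangential approach to $\infty$ to non-tangential approach to $\infty$), and Gelfand's formula combined with the exact norm formula of Theorem \ref{UpperBound}. Your treatment of the one genuinely delicate point---that non-tangential sequences are mapped to non-tangential sequences---is sound.
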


\begin{thm}  Every bounded composition operator on $\A_\alpha^2(\mathbb H)$ has essential norm equal to its operator norm.  In particular, since the zero operator is not a composition operator, there are no compact composition operators on any of the spaces $\A_\alpha^2(\mathbb H)$.
\end{thm}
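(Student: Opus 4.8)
The plan is to prove the only nontrivial inequality, the lower bound $\|C_\phi\|_e \ge \|C_\phi\|$; the reverse $\|C_\phi\|_e \le \|C_\phi\|$ is automatic. By Theorem \ref{UpperBound} we have $\|C_\phi\| = \lambda^{(2+\alpha)/2}$ with $\lambda = \phi'(\infty)\in(0,\infty)$, so it suffices to show $\|C_\phi\|_e \ge \lambda^{(2+\alpha)/2}$. Since the compact operators form a self-adjoint ideal and $\|A^*\|=\|A\|$ for every bounded $A$, one has $\|C_\phi\|_e = \|C_\phi^*\|_e$, and I would work with the adjoint, exploiting the identity $C_\phi^* k_\omega^\alpha = k_{\phi(\omega)}^\alpha$ already used in Proposition \ref{Prop:2n-2}. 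The strategy is the familiar one of producing a weakly null sequence of unit vectors on which $C_\phi^*$ stays large, since any compact operator annihilates such a sequence in norm.

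First I would record, from the reproducing property (\ref{KernelProperty}), that $\|k_\omega^\alpha\|^2 = k_\omega^\alpha(\omega) = 2^\alpha(1+\alpha)/(2\Re\omega)^{2+\alpha}$. Writing $\hat k_\omega^\alpha := k_\omega^\alpha/\|k_\omega^\alpha\|$ for the normalised kernel, this yields
\[
\|C_\phi^* \hat k_\omega^\alpha\| = \frac{\|k_{\phi(\omega)}^\alpha\|}{\|k_\omega^\alpha\|} = \Big(\frac{\Re\omega}{\Re\phi(\omega)}\Big)^{(2+\alpha)/2}.
\]
I would then specialise to $\omega = x_n$ with $x_n\to+\infty$ along the positive real axis, which is a non-tangential approach to $\infty$. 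Because $\phi$ has a finite angular derivative, the non-tangential limit $x/\phi(x)\to\lambda$ holds as $x\to\infty$ with $\lambda$ real and positive, so $\Re\phi(x_n)/x_n\to 1/\lambda$ and hence $\|C_\phi^* \hat k_{x_n}^\alpha\| \to \lambda^{(2+\alpha)/2}$.

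The remaining ingredient is that $\hat k_{x_n}^\alpha \to 0$ weakly. As these are unit vectors it suffices, by a standard approximation argument, to verify $\langle f, \hat k_{x_n}^\alpha\rangle \to 0$ on the dense span of the reproducing kernels; for $f = k_w^\alpha$ the reproducing property gives $\langle k_w^\alpha, \hat k_{x_n}^\alpha\rangle = k_w^\alpha(x_n)/\|k_{x_n}^\alpha\|$, which is a constant multiple of $x_n^{(2+\alpha)/2}/(x_n+\bar w)^{2+\alpha}$ and tends to $0$ since $2+\alpha>0$. Granting this, for every compact $K$ one has $\|K\hat k_{x_n}^\alpha\|\to 0$, whence
\[
\|C_\phi^* - K\| \ge \limsup_n \|(C_\phi^*-K)\hat k_{x_n}^\alpha\| \ge \limsup_n \|C_\phi^*\hat k_{x_n}^\alpha\| = \lambda^{(2+\alpha)/2}.
\]
Taking the infimum over compact $K$ gives $\|C_\phi\|_e = \|C_\phi^*\|_e \ge \lambda^{(2+\alpha)/2} = \|C_\phi\|$, proving equality.

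For the final assertion, if some composition operator $C_\phi$ were compact then $\|C_\phi\|_e=0$ would force $\|C_\phi\|=0$ by the equality just proved, hence $C_\phi=0$; but $C_\phi k_w^\alpha = k_w^\alpha\circ\phi \ne 0$ shows the zero operator is not a composition operator, so no $C_\phi$ can be compact. I expect the only delicate point to be ensuring that the chosen sequence realises the angular derivative \emph{in the limit} rather than merely respecting the bound $\Re z/\Re\phi(z)\le\lambda$ of Lemma \ref{AngularDeriv}; approaching $\infty$ along the real axis and invoking the existence of the non-tangential limit $x/\phi(x)\to\lambda$ is what secures this.
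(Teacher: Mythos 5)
Your proposal is correct, and it is essentially the argument the paper intends: the paper defers this theorem to the methods of \cite{ElliottJury}, which are precisely your weakly-null normalised reproducing kernel argument, i.e.\ showing $\|C_\phi^*\hat k_{x_n}^\alpha\| \to \lambda^{(2+\alpha)/2}$ along a sequence realising the angular derivative while compact operators annihilate the sequence. The only cosmetic difference is that you secure the limit by going along the real axis and using the non-tangential limit $z/\phi(z)\to\lambda$, whereas one could equally invoke part (3) of Lemma \ref{AngularDeriv}, which says the unrestricted $\limsup_{z\to\infty}\Re z/\Re\phi(z)$ already equals $\lambda$.
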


\section*{Acknowledgements}
The authors would like to thank both Jonathan Partington and Michael Jury for helpful mathematical conversations which have aided the progress of this work.

\end{document}